\begin{document}

\title{On locating the zeros and poles of a meromorphic function}


\author{Haotian Chen}


\institute{H. Chen\at
              Institute of Space Science and Technology, Nanchang University, Nanchang, 330031, China\\
Department of Atomic, Molecular and Nuclear Physics, University of Seville, Seville, 41012, Spain\\
              \email{haotianchen-ext@us.es}
}


\maketitle

\begin{abstract}
	On the basis of the generalized argument principle, here we develop a numerical scheme for locating zeros and poles of a meromorphic function.
	A subdivision-transformation-calculation scheme is proposed to ensure the algorithm stability. A novel feature of this algorithm is the ability to estimate the error level automatically.
	Numerical examples are also presented, with an emphasis on potential applications to plasma physics.
\keywords{Generalized argument principle \and Prony's method \and Nonlinear eigenvalue problem \and Waves in plasmas}
\end{abstract}

\section{Introduction}
\label{intro}
In this paper, we revisit the numerical method for locating zeros and poles of a meromorphic function $f(z)$ in a given region $\mathbb{D}$ on the complex plane, by employing the generalized argument principle. 
Historically, this question can be traced back to the pioneering work by Harry Nyquist in 1932 \cite{cit:nyquist}, which determined the stability of a dynamical system by searching the number of zeros of an analytic function in the upper-half plane. Although it has been widely used, the Nyquist stability criterion  can only provide the number of zeros, without information about their locations.
The modern argument principle approach to computing zeros of an analytic function was proposed by Delves and Lyness \cite{delves67}, in which a monic polynomial having the same zeros as the analytic function was introduced, with coefficients being calculated via Newton's identities. This procedure is, however, usually ill-conditioned. 
Later, the algorithm was modified to locate the zeros and poles of a meromorphic function \cite{cit:feb:09:18:47}.
In a series of papers by Kravanja \emph{et al.} \cite{kravanja98,kravanja99,kravanja99b}, the Delves-Lyness method was  systematically  extended by using the so-called formal orthogonal polynomials \cite{hildebrand87}.
This approach, as expected, has additional complication to generate the desired formal orthogonal polynomials for certain bilinear forms.
Based on a detailed  sensitivity analysis, here we propose a subdivision-transformation-calculation scheme to avoid ill-conditioning in numerical calculation. In particular,  the subdivision of region and the calculation of zeros and poles are separately carried out in different spaces.
A novel method of measuring the absolute error of the locations for zeros and poles is also given by a thorough error analysis.
Practical applications to the investigation of linear wave properties of various plasma waves are presented.


The paper is organized as follows. 
Section \ref{sec:feb:09:15:10} briefly reviews the mathematical background of finding zeros and poles via the generalized argument principle. 
In Sec. \ref{sec:feb:09:15:11}, we analyse the sensitivity of this algorithm, and give an explicit expression for the condition number, which leads us to a subdivision-transformation-calculation scheme for the algorithm stability control.
In Sec. \ref{sec:feb:09:15:12}, we prove a theorem for the singular pencil corrupted by noise, and apply it to the error estimate.
Section \ref{sec:feb:09:15:13} presents numerical examples. 
Conclusions are given in Sec. \ref{sec:feb:09:15:15}.

\section{Argument principle method for locating zeros and poles}
\label{sec:feb:09:15:10}
We consider a meromorphic function $f(z)$ in a closed complex domain $\mathbb{D}$, bounded by a Jordan curve $\mathcal{C}$. Assuming that $f(z)$ has $N$ zeros and poles within $\mathbb{D}$ but no zeros or poles on $\mathcal{C}$, then, from the well-known generalized argument principle, we have 
\begin{equation}
\label{eq:apr:06:15:26}
G_{k}\equiv\frac{1}{2\pi i}\ointctrclockwise_{C}\frac{f'(z)}{f(z)}g_{k}(z) dz =\sum_{j=1}^{N}g_{k}(a_{j})n_{j},
\end{equation}
where the summation is over all zeros and poles $a_{j}$ of $f(z)$ counted with their multiplicities $n_{j}$, and $g_{k}(z)$ is an analytic  function  in $\mathbb{D}$, which will be referred to as a probe function hereafter.
Given the values of $f(z), g_{k}(z)$ along $\mathcal{C}$, the fundamental idea of argument principle approach is to recover $(a_{j}, n_{j})$ from a series of $G_{k}$.
Furthermore, from Eq. (\ref{eq:apr:06:15:26}), it is evident that the efficiency of the algorithm depends strongly on efficient contour integration rules. 
However, a detailed discussion of the numerical contour integration method is beyond the scope intended for this work. 

The key issue for the argument principle approach is to choose suitable probe functions $g_{k}(z)$.
Although many sophisticated schemes have been suggested, here  we introduce a simple transformed probe functions 
\begin{equation}
\label{eq:jan:25:12:59}
g_{k}(\zeta) =\zeta^{\gamma_{0}+k\Delta\gamma}, \quad k=0,1,2,\cdots,
\end{equation}
where $\gamma_{0}, \Delta\gamma\in\mathbb{N}$, $\zeta$ denotes the new complex variable, and the transformation from $z$ to $\zeta$ is defined  symbolically as
\begin{equation}
\label{eq:feb:02:20:58}
	\zeta=T(z).
\end{equation}
The values of $\gamma_{0}, \Delta\gamma$ and the detailed expression of $T$ will be discussed later.
Accordingly, $\mathbb{D}$ is also transformed into the domain $\mathbb{D}_{\zeta}$ in $\zeta$-space.
With the new variable, the moments in $\zeta$-space can be expressed as
\begin{equation}
\label{eq:jan:25:16:01}
\bar{G}_{k}\equiv\frac{1}{2\pi i}\ointctrclockwise_{C}\frac{f'(\zeta)}{f(\zeta)}g_{k}(\zeta) d\zeta =\sum_{j=1}^{N}g_{k}(\zeta_{j})n_{j}.
\end{equation}

Once the moments $\bar{G}_{k}$ have been found, following  \cite{delves67,kravanja98}, we can construct the Hankel matrices $\bar{\mathbf{H}}_{0,N}, \bar{\mathbf{H}}_{1,N}$ as
\begin{displaymath}
	\bar{\mathbf{H}}_{0,N}=
	\left( \begin{array}{cccc}
			\bar{G}_{0} & \bar{G}_{1}&\ldots & \bar{G}_{N-1}\\
			\bar{G}_{1} &  \bar{G}_{2} &\iddots  & \bar{G}_{N} \\
			 \vdots & \iddots  &\iddots& \vdots \\
			 \bar{G}_{N-1} & \bar{G}_{N} &\ldots & \bar{G}_{2N-2}
	\end{array}\right) 
\end{displaymath}
and
\begin{displaymath}
	\bar{\mathbf{H}}_{1,N}=
	\left( \begin{array}{cccc}
			\bar{G}_{1} & \bar{G}_{2}&\cdots & \bar{G}_{N}\\
			\bar{G}_{2} & \bar{G}_{3}  &\iddots  & \bar{G}_{N+1} \\
			 \vdots & \iddots  & \iddots  & \vdots \\
			 \bar{G}_{N}  & \bar{G}_{N+1}  &\ldots& \bar{G}_{2N-1}
	\end{array}\right).
\end{displaymath}
Since an arbitrary Hankel matrix of finite rank admits a Vandermonde decomposition, these matrices can be factorized as \cite{boley97}:
\begin{equation}
\label{eq:jun:22:17:30}
\bar{\mathbf{H}}_{0,N}=\mathbf{V} \mathbf{D}_{0} \mathbf{V}^{T},
\end{equation}
and
\begin{equation}
\label{eq:jun:22:17:31}
\bar{\mathbf{H}}_{1,N}=\mathbf{V} \mathbf{D}_{1} \mathbf{V}^{T},
\end{equation}
where
\begin{displaymath}
	\mathbf{V}=
	\left( \begin{array}{cccc}
			1 & 1&\ldots & 1\\
			\zeta_{1}^{\Delta\gamma} &\zeta_{2}^{\Delta\gamma} &\ldots  &  \zeta_{N}^{\Delta\gamma}\\
			 \vdots & \vdots  &\vdots& \vdots \\
			\zeta_{1}^{(N-1)\Delta\gamma} &\zeta_{2}^{(N-1)\Delta\gamma} &\ldots  &  \zeta_{N}^{(N-1)\Delta\gamma}\\
	\end{array}\right),
\end{displaymath}
is a Vandermonde matrix,
\begin{eqnarray}
\label{eq:jan:26:12:38}
\mathbf{D}_{0}=diag(\zeta_{1}^{\gamma_{0}}n_{1},\zeta_{2}^{\gamma_{0}}n_{2},\cdots, \zeta_{N}^{\gamma_{0}}n_{N}),
\end{eqnarray}
and
\begin{eqnarray}
\label{eq:jan:26:12:39}
\mathbf{D}_{1}=diag(\zeta_{1}^{\gamma_{0}+\Delta\gamma}n_{1}, \zeta_{2}^{\gamma_{0}+\Delta\gamma}n_{2},\cdots,\zeta_{N}^{\gamma_{0}+\Delta\gamma}n_{N}).
\end{eqnarray}
From these expressions, one can easily prove the following two theorems \cite{kravanja98}:
\begin{theorem}
\label{theorem:may:04:11:32}
Let $N$ be the number of zeros and poles, then $N=rank(\bar{\mathbf{H}}_{0,N+p})$ for every $p\in\mathbb{N}$.
\end{theorem}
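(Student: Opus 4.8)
The plan is to promote the Vandermonde-type factorization already recorded for $\bar{\mathbf H}_{0,N}$ in Eqs.~(\ref{eq:jun:22:17:30}) and (\ref{eq:jan:26:12:38}) to the larger matrix $\bar{\mathbf H}_{0,N+p}$, and then read off its rank. Set $M=N+p$. The identity (\ref{eq:jan:25:16:01}) gives $\bar G_k=\sum_{j=1}^{N}\zeta_j^{\gamma_0+k\Delta\gamma}\,n_j$, so the $(r,s)$ entry of $\bar{\mathbf H}_{0,M}$, namely $\bar G_{r+s}$ for $r,s=0,\dots,M-1$, splits as $\bar G_{r+s}=\sum_{j=1}^{N}(\zeta_j^{\Delta\gamma})^{r}\,\big(\zeta_j^{\gamma_0}n_j\big)\,(\zeta_j^{\Delta\gamma})^{s}$. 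In matrix form this is precisely $\bar{\mathbf H}_{0,M}=\mathbf V_M\,\mathbf D_0\,\mathbf V_M^{T}$, where $\mathbf V_M$ is the $M\times N$ rectangular Vandermonde matrix with nodes $\zeta_1^{\Delta\gamma},\dots,\zeta_N^{\Delta\gamma}$ (the first $M$ rows of the natural enlargement of $\mathbf V$) and $\mathbf D_0$ is the same $N\times N$ diagonal matrix as in (\ref{eq:jan:26:12:38}).

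From this factorization the bound $\mathrm{rank}(\bar{\mathbf H}_{0,M})\le N$ is immediate, since no factor has more than $N$ columns. For the reverse inequality I would argue in two short steps. First, $\mathbf D_0$ is nonsingular: the $a_j$ are by hypothesis actual zeros or poles, so $n_j\ne 0$, and the transformation $T$ together with the exponent $\gamma_0$ are set up so that none of the $\zeta_j=T(a_j)$ vanishes, hence each diagonal entry $\zeta_j^{\gamma_0}n_j\ne 0$. Second, $\mathbf V_M$ has full column rank $N$: it is a tall Vandermonde matrix ($M=N+p\ge N$) whose nodes $\zeta_j^{\Delta\gamma}$ are pairwise distinct. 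Combining the two, $\mathbf D_0\mathbf V_M^{T}$ is an $N\times M$ matrix of rank $N$, and left-multiplying by the injective map $\mathbf V_M$ cannot lower the rank, so $\mathrm{rank}(\bar{\mathbf H}_{0,M})=\mathrm{rank}(\mathbf D_0)=N$, as claimed.

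I expect the only delicate point to be the two non-degeneracy conditions used above — $\zeta_j\ne 0$ and the distinctness of the powers $\zeta_j^{\Delta\gamma}$ — which have to be guaranteed by the as-yet-unspecified choice of $T$, $\gamma_0$, $\Delta\gamma$ around (\ref{eq:jan:25:12:59})--(\ref{eq:feb:02:20:58}); the distinctness in particular is automatic when $\Delta\gamma=1$ and more generally must be imposed as a constraint on the geometry of $\mathbb{D}_{\zeta}$. Beyond that the statement is just the concrete form, in the present notation, of the classical principle (Kronecker's theorem; the Prony/Hankel correspondence) that a Hankel matrix built from a superposition of $N$ distinct geometric progressions has rank exactly $N$ as soon as its size reaches $N$. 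The same computation also shows why $N$ can be recovered in practice as the numerical rank of $\bar{\mathbf H}_{0,N+p}$ for any convenient amount of oversampling $p$, which is the way the theorem will be used later.
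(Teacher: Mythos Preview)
Your argument is correct and is precisely the one the paper has in mind: the paper does not spell out a proof of Theorem~\ref{theorem:may:04:11:32} but simply asserts that it follows from the Vandermonde decomposition~(\ref{eq:jun:22:17:30}) (citing \cite{kravanja98}), and your proposal is the natural way to make that remark explicit by extending the factorization to $\bar{\mathbf H}_{0,N+p}=\mathbf V_M\mathbf D_0\mathbf V_M^{T}$ and reading off the rank. Your caveats about the non-degeneracy assumptions ($\zeta_j\neq 0$ and distinctness of the $\zeta_j^{\Delta\gamma}$) are well placed; in the paper these are implicitly secured once $\gamma_0=0$, $\Delta\gamma=1$, and the exponential map~(\ref{eq:tranform}) are adopted in Sec.~\ref{sec:feb:09:15:11}.
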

Theorem \ref{theorem:may:04:11:32}  gives us the number of the zeros and poles $N$.
\begin{theorem}
\label{theorem:may:04:16:45}
The eigenvalues of the generalized eigenvalue problem 
\begin{eqnarray}
\label{eq:jan:26:15:59}
\bar{\mathbf{H}}_{1,N}\vec{x}=\lambda\bar{\mathbf{H}}_{0,N}\vec{x}
\end{eqnarray}
are given by $\lambda_{i}=\zeta_{i}^{\Delta\gamma}$,  with the corresponding eigenvectors $\vec{x}_{i}=\mathbf{V}^{-T}\hat{e}_{i}$, $\hat{e}_{i}$ is the unit vector in $i$-direction.
\end{theorem}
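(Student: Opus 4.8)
The plan is to substitute the Vandermonde factorizations (\ref{eq:jun:22:17:30})--(\ref{eq:jun:22:17:31}) directly into the pencil (\ref{eq:jan:26:15:59}) and exploit the fact that $\mathbf{D}_{0}$ and $\mathbf{D}_{1}$ differ only by a diagonal scaling. Concretely, comparing (\ref{eq:jan:26:12:38}) with (\ref{eq:jan:26:12:39}) I would first record the identity $\mathbf{D}_{1}=\mathbf{\Lambda}\mathbf{D}_{0}$, where $\mathbf{\Lambda}=diag(\zeta_{1}^{\Delta\gamma},\dots,\zeta_{N}^{\Delta\gamma})$, since $\zeta_{j}^{\gamma_{0}+\Delta\gamma}n_{j}=\zeta_{j}^{\Delta\gamma}\,(\zeta_{j}^{\gamma_{0}}n_{j})$. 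The generalized eigenvalue problem then reads $\mathbf{V}\mathbf{\Lambda}\mathbf{D}_{0}\mathbf{V}^{T}\vec{x}=\lambda\,\mathbf{V}\mathbf{D}_{0}\mathbf{V}^{T}\vec{x}$, and the whole proof amounts to peeling off the invertible outer factors.

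Next I would verify the non-degeneracy needed to do that peeling. Assuming the transformed points $\zeta_{1}^{\Delta\gamma},\dots,\zeta_{N}^{\Delta\gamma}$ are pairwise distinct, $\mathbf{V}$ is a nonsingular Vandermonde matrix; and since $n_{j}\neq 0$ by the definition of multiplicity and $\zeta_{j}\neq 0$ (so that $0\notin\mathbb{D}_{\zeta}$, which we may arrange through the choice of $T$), $\mathbf{D}_{0}$ is nonsingular. Left-multiplying by $\mathbf{V}^{-1}$ and setting $\vec{w}=\mathbf{V}^{T}\vec{x}$ gives $\mathbf{\Lambda}\mathbf{D}_{0}\vec{w}=\lambda\mathbf{D}_{0}\vec{w}$; because $\mathbf{\Lambda}$ and $\mathbf{D}_{0}$ are diagonal they commute, and multiplying through by $\mathbf{D}_{0}^{-1}$ collapses the problem to the ordinary eigenvalue problem $\mathbf{\Lambda}\vec{w}=\lambda\vec{w}$ for a diagonal matrix. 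Its spectrum is exactly $\{\zeta_{i}^{\Delta\gamma}\}_{i=1}^{N}$ with eigenvectors $\vec{w}=\hat{e}_{i}$, and undoing the substitution yields $\vec{x}_{i}=\mathbf{V}^{-T}\hat{e}_{i}$, which is the assertion.

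The algebra is essentially a one-liner; the only point that genuinely requires care --- and the place where the statement could fail --- is the invertibility of $\mathbf{V}$. This hinges on the $\zeta_{i}^{\Delta\gamma}$ being distinct, which is not automatic: even if the $\zeta_{i}$ themselves are distinct, two of them may coincide after raising to the power $\Delta\gamma$ (i.e.\ when they differ by a $\Delta\gamma$-th root of unity), in which case the Vandermonde decomposition (\ref{eq:jun:22:17:30})--(\ref{eq:jun:22:17:31}) degenerates and the pencil $(\bar{\mathbf{H}}_{1,N},\bar{\mathbf{H}}_{0,N})$ becomes singular. I would therefore carry along the standing hypothesis --- inherited from the earlier theorems and from the design of $T$ and $\Delta\gamma$ --- that $z\mapsto T(z)^{\Delta\gamma}$ is injective on $\mathbb{D}$ (for $\Delta\gamma=1$ this reduces to $T$ being injective), under which the argument above goes through verbatim; the more delicate perturbed version of this separation requirement is precisely what the error analysis of Sec.~\ref{sec:feb:09:15:12} will have to quantify.
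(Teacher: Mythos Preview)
Your proposal is correct and is exactly the argument the paper has in mind: the paper does not spell out a proof of Theorem~\ref{theorem:may:04:16:45} but merely says that ``from these expressions, one can easily prove'' it using the factorizations (\ref{eq:jun:22:17:30})--(\ref{eq:jun:22:17:31}), and indeed the identical substitute-and-peel manoeuvre you describe appears verbatim in the paper's proof of Theorem~\ref{theorem:aug:23:10:42} (see Eq.~(\ref{eq:fen:06:18:25})). Your explicit discussion of the invertibility hypothesis on $\mathbf{V}$ (distinctness of the $\zeta_{i}^{\Delta\gamma}$) is a welcome addition that the paper leaves implicit.
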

It is worthwhile mentioning that Theorem \ref{theorem:may:04:16:45} is essentially a reformulated Prony's method, which has been widely used in spectral evaluation \cite{hildebrand87}.
The generalized eigenvalue problem,  Eq.(\ref{eq:jan:26:15:59}), can be solved by a QZ algorithm with $\mathcal{O}(N^{3})$ operations \cite{golub132}, yielding  the desired zeros and poles $\zeta_{j}$ in $\zeta$-space.
Once $\zeta_{j}$ are known, the multiplicities can be obtained through a Vandermonde system (Eq.(\ref{eq:jan:25:16:01})). As the multiplicities must be integers, this step is relatively robust.
Mathematically, by using the coordinate transformation $z=T^{-1}(\zeta)$,  one can thus recover the locations of zeros and poles of $f(z)$ within $\mathbb{D}$. 

\section{Sensitivity Analysis}
\label{sec:feb:09:15:11}
In this section, the parameters $\gamma_{0}, \Delta\gamma$ and transformation $\zeta=T(z)$ are determined via the sensitivity analysis.
Noting that, since both Vandermonde and Hankel matrices can be ill-conditioned, it's impossible to assert the algorithm stability in any universal sense. 
However, one can turn to the question of what actually affects the sensitivity, and how to find a stable parameter regime in practical applications. 

For simplicity and hence clarity,  we assume, without loss of generality, that the complex domain $\mathbb{D}_{\zeta}$ is bounded by a circle $\mathcal{C}$ with the center $\zeta=0$ and radius $r_{c}$, and there are $N$ zeros and poles $\zeta_{j}$ in it.  We employ the standard perturbative treatment of the generalized eigenvalue problem \cite{golub131}. That is,  Eq.(\ref{eq:jan:26:15:59}) is perturbed  as
\begin{equation}
\label{eq:jun:14:22:58}
[(\bar{\mathbf{H}}_{1,N}+\epsilon\hat{\mathbf{H}}_{1,N})-(\lambda+\epsilon\hat{\lambda})(\bar{\mathbf{H}}_{0,N}+\epsilon\hat{\mathbf{H}}_{0,N})](\vec{x}+\epsilon\hat{x})=0,
\end{equation}
where $\epsilon$ is a small expansion parameter,  $\hat{\mathbf{H}}_{0,N},\hat{\mathbf{H}}_{1,N}$ are the normalized perturbations with Hankel structure.
To the first order in $\epsilon$, we have
\begin{equation}
\label{eq:jun:14:23:02}
(\bar{\mathbf{H}}_{1,N}-\lambda\bar{\mathbf{H}}_{0,N})\hat{x}=(\lambda\hat{\mathbf{H}}_{0, N}+\hat{\lambda}\bar{\mathbf{H}}_{0, N}-\hat{\mathbf{H}}_{1, N})\vec{x}.
\end{equation}
Given an arbitrary eigenvalue $\lambda_{i}$ and the corresponding eigenvector $\vec{x}_{i}$, the symmetry in Hankel matrices then gives
\begin{equation}
\label{eq:jun:15:16:54}
\vec{x}^{T}_{i}(\bar{\mathbf{H}}_{1, N}-\lambda_{i}\bar{\mathbf{H}}_{0, N})=0.
\end{equation}
Pre-multiplying Eq.(\ref{eq:jun:14:23:02}) by $\vec{x}_{i}^{T}$ and taking the norm, we obtain
\begin{equation}
\label{eq:jun:15:18:09}
|\hat{\lambda}_{i}|=\frac{|\vec{x}^{T}_{i}\hat{\mathbf{H}}_{1}\vec{x}_{i}|-|\lambda_{i} \vec{x}^{T}_{i}\hat{\mathbf{H}}_{0}\vec{x}_{i}|}{|\vec{x}^{T}_{i}\bar{\mathbf{H}}_{0}\vec{x}_{i}|},
\end{equation}
and $\epsilon\hat{\lambda}_{i}$ gives the error estimation.
Noting $\vec{x}_{i}=\mathbf{V}^{-T}\hat{e}_{i}$, we find
\begin{equation}
\label{eq:jun:22:16:40}
|\vec{x}^{T}_{i}\bar{\mathbf{H}}_{0, N}\vec{x}_{i}|=|\vec{x}^{T}_{i} \mathbf{V} \mathbf{D_{0}} \mathbf{V}^{T} \vec{x}_{i}|=|\zeta_{i}^{\gamma_{0}}n_{i}|,
\end{equation}
and the corresponding sensitivity can thus be estimated as
\begin{eqnarray}
\label{eq:jun:22:15:52}
|\hat{\lambda}_{i}|\le\|\vec{e}^{T}_{i}\mathbf{V}^{-1}\|_{\infty}^{2}\frac{\|\hat{\mathbf{H}}_{1, N}\|_{\infty}+|\zeta_{i}^{\Delta\gamma} |\|\hat{\mathbf{H}}_{0, N}\|_{\infty}}{|\zeta_{i}^{\gamma_{0}}n_{i}|}.
\end{eqnarray}
Postulating further $\|\hat{\mathbf{H}}_{1, N}\|_{\infty}=\|\bar{\mathbf{H}}_{1}\|_{\infty}$, $\|\hat{\mathbf{H}}_{0, N}\|_{\infty}=\|\bar{\mathbf{H}}_{0}\|_{\infty}$, then,  by noticing that 
\begin{equation}
\label{eq:jun:22:17:23}
\|\mathbf{V}\|_{\infty}\le N \max_{k,j=1}^{N}\{|\zeta_{k}^{(j-1)\Delta\gamma}|\},
\end{equation}
\begin{equation}
\label{eq:jun:22:17:25}
\|\bar{\mathbf{H}}_{0,N}\|_{\infty}\le \|\mathbf{V}\|^{2}_{\infty}\max_{j=1}^{N}\{|n_{j}\zeta_{j}^{\gamma_{0}}|\},
\end{equation}
\begin{equation}
\label{eq:jun:22:17:55}
\|\bar{\mathbf{H}}_{1, N}\|_{\infty}\le\|\mathbf{V}\|^{2}_{\infty}\max_{j=1}^{N}\{|n_{j}\zeta_{j}^{\gamma_{0}+\Delta\gamma}|\},
\end{equation}
and the property of inverse Vandermonde matrix  \cite{cit:feb:03:10:43},
\begin{equation}
\label{eq:jun:22:17:12}
\|\vec{e}^{T}_{i}\mathbf{V}^{-1}\|_{\infty}\le\prod_{j\ne i}\frac{1+|\zeta_{j}^{\Delta\gamma}|}{|\zeta_{i}^{\Delta\gamma}-\zeta_{j}^{\Delta\gamma}|},
\end{equation}
it is possible to show that a smaller $\gamma_{0}$ can improve the condition of the system. Correspondingly, we should adopt $\gamma_{0}=0$. 
In this way, Eq.(\ref{eq:jun:22:15:52}) can be rendered into
\begin{eqnarray}
\label{eq:jun:22:20:29}
|\hat{\lambda}_{i}|\le \kappa_{\infty}^{2}(\max_{j=1}^{N}\{|\zeta_{j}^{\Delta\gamma}|\}+|\zeta_{i}^{\Delta\gamma} |),
\end{eqnarray}
with the condition number $\kappa_{\infty}^{2}=\|\vec{e}^{T}_{i}\mathbf{V}^{-1}\|^{2}_{\infty}\|\mathbf{V}\|^{2}_{\infty}$.
Letting
\begin{equation}
\label{eq:feb:03:15:21}
r_{+}=\max_{j=1}^{N}(|\zeta_{j}|),\quad r_{-}=\min_{j=1}^{N}(|\zeta_{j}|),
\end{equation}
Eqs. (\ref{eq:jun:22:17:23}) and (\ref{eq:jun:22:17:12}) can be cast, respectively, as
\begin{equation}
\label{eq:jun:22:22:54}
\|\mathbf{V}\|_{\infty}\le N r_{+}^{(N-1)\Delta\gamma}, \quad \textrm{with $r_{+}\ge 1$,}
\end{equation}
and
\begin{equation}
\label{eq:jun:22:23:02}
\|\vec{e}^{T}_{i}\mathbf{V}^{-1}\|_{\infty}\le r_{-}^{-(N-1)\Delta\gamma}\prod_{j\ne i}\frac{1+|\zeta_{j}^{\Delta\gamma}|}{|e^{i\theta_{i}\Delta\gamma}-e^{i\theta_{j}\Delta\gamma}|},
\end{equation}
with  $\zeta_{j}=|\zeta_{j}|\textrm{Exp}(i\theta_{j})$.
Therefore, the condition number can be explicitly written as
\begin{eqnarray}
\label{eq:jun:22:23:08}
\kappa_{\infty}^{2}&\le& N^{2} r_{+}^{2(N-1)\Delta\gamma}[1+r_{+}^{\Delta\gamma}]^{2(N-1)}(\prod_{j\ne i}\frac{1}{|\zeta_{i}^{\Delta\gamma}-\zeta_{j}^{\Delta\gamma}|})^{2}\nonumber\\
&\le& N^{2} (\frac{r_{+}}{r_{-}})^{2(N-1)\Delta\gamma}[1+r_{+}^{\Delta\gamma}]^{2(N-1)}(\prod_{j\ne i}\frac{1}{|e^{i\theta_{i}\Delta\gamma}-e^{i\theta_{j}\Delta\gamma}|})^{2}.
\end{eqnarray}
From Eq. (\ref{eq:jun:22:23:08}), it is clear that the system is always stable for the $N=1$ case, as expected;
 and the system is more stable for small $\Delta\gamma$ and $r_{+}$. Thus we should set $\Delta \gamma=1$ and $r_{+}=1$. 
In addition, Eq.(\ref{eq:jun:22:23:08}) also demonstrates that the sensitivity is determined by the number and locations of zeros and poles, regardless of the $f(\zeta)$ values along the Jordan curve. In particular, the large $N$ and $r_{+}/r_{-}$, and the existence of clusters can make the system very ill-conditioned.
For zeros and poles distributed uniformly around the unit circle, however, the system is stable even with a large $N$ number. 

This fact allows us to adopt a subdivision-transformation-calculation scheme to ensure the algorithm stability.
More specifically, the subdivision process is carried out in $z$-space. Noting that the subdivision scheme has been studied extensively by previous works \cite{delves67,kravanja98,kravanja99,kravanja99b}, it is not necessary to go into details here.  In this study, for simplicity, we search in rectangles to avoid redundancy during subdivision, with a generic example of the rectangle region searched shown in Fig. (\ref{eps:rectangle}).
Once the rectangular vertexes, namely the points A, B, C and D in Fig. (\ref{eps:rectangle}), are given, the rectangle in $z$-space can be transformed to a slotted annulus right next to the unit circle in $\zeta$-space, as seen in Fig. (\ref{eps:circle}), via the transformation:
\begin{eqnarray}
\label{eq:tranform}
	\zeta\equiv T(z)=e^{-\frac{i(2\pi-\epsilon_{0})(z e^{-i\alpha}-z_{0})}{L}}.
\end{eqnarray}
Here, the small positive parameter $\epsilon_{0}\ll 1$ is introduced to keep away from the branch cut of the complex logarithm in $\zeta$-space, $\alpha\in (-\pi,\pi]$ is the angle between the positive $\textrm{Re}(z)$-axis and the line $\overline{AB}$, and $z_{0}$ and $L$ are, respectively, the midpoint and length of $\overline{CD}$.
Then, zeros and poles $(\zeta_{i}, n_{i})$  are preliminarily calculated in $\zeta$-space by using the algorithm presented in Sec. (\ref{sec:feb:09:15:10}), and the associated condition number of the Prony system is estimated by Eq. (\ref{eq:jun:22:23:08}).
If the condition number is larger than the preassigned value, one should suitably subdivide the region into smaller subregions in $z$-space,  transform each subregion into $\zeta$-space, and calculate the corresponding zeros and poles. 
The subdivision-transformation-calculation process is repeated until the resulting condition number is acceptable.

Furthermore, it is also worthwhile noting that one feasibility of Eq. (\ref{eq:jun:22:23:08}) lies in the identifying of primary factors influencing the stability of the system.
Consistently with the fact that the total time taken is mainly set by the number of regions, it is possible and desirable to develop a  more sophisticated subdivision scheme and, thus, to significantly improve the efficiency of algorithm.
This topic will be pursued in future publications.


\begin{figure}[!htp]
\centering
	\subfloat[]{
\label{eps:rectangle}
\begin{minipage}[t]{0.45\textwidth}
\centering
\includegraphics[scale=0.30]{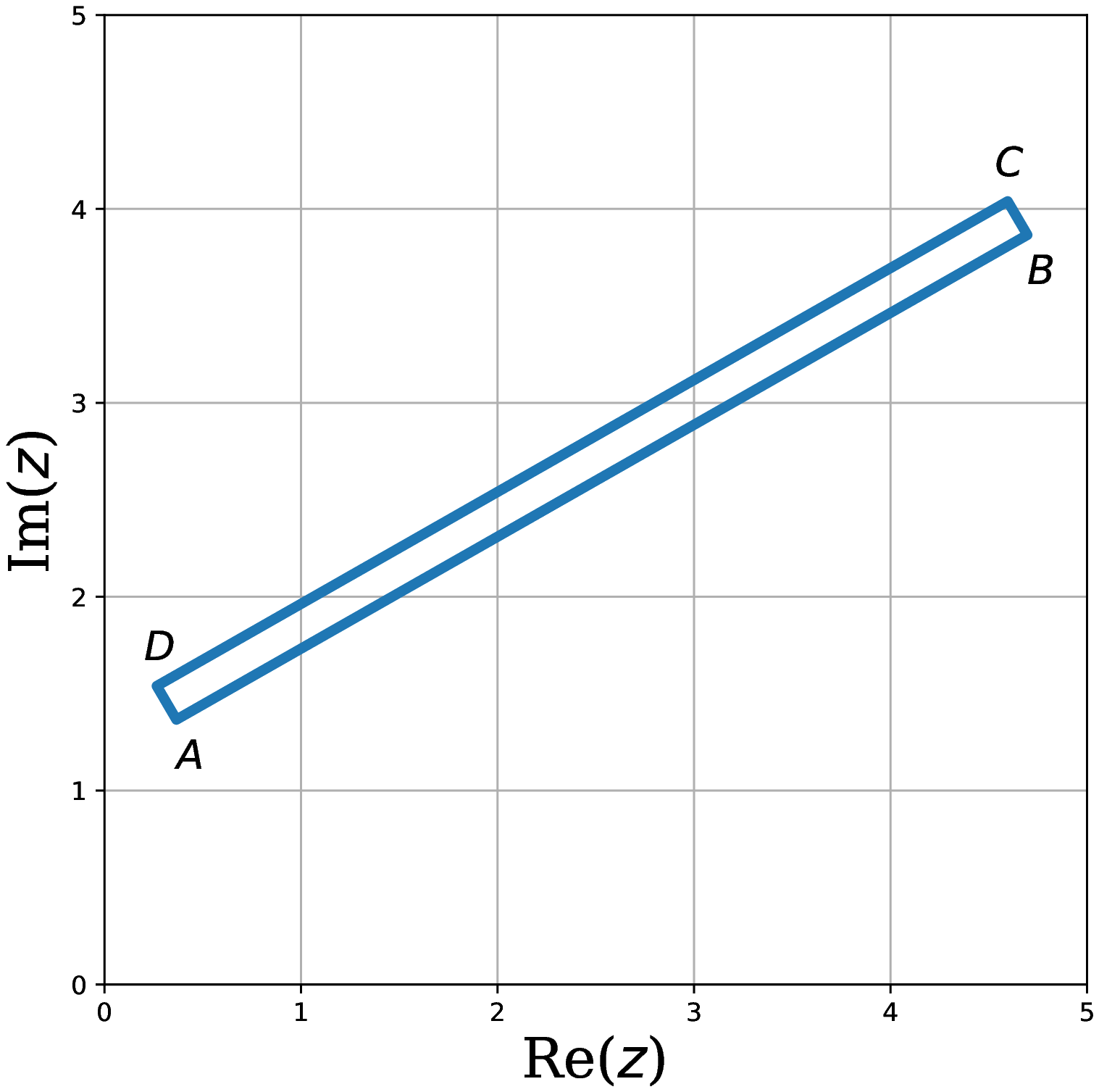}
\end{minipage}
}
	\subfloat[]{
\label{eps:circle}
\begin{minipage}[t]{0.45\textwidth}
\centering
\includegraphics[scale=0.30]{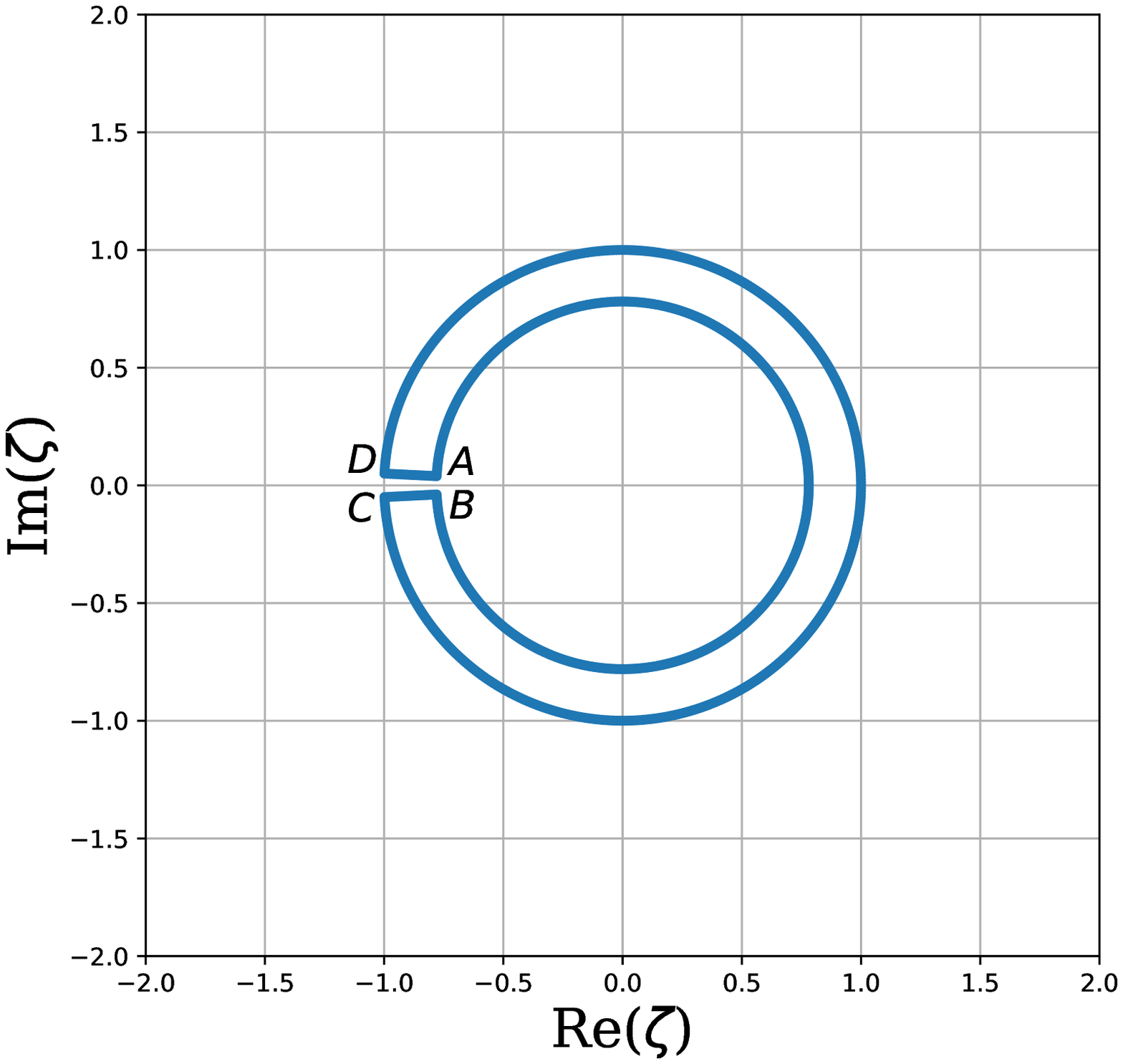}
\end{minipage}
}
\caption{(Color online) A rectangle in $z$-space is transformed to a slotted annulus in $\zeta$-space. }
\label{eps:transform}
\end{figure}

\section{Error Analysis}
\label{sec:feb:09:15:12}
As the generalized eigenvalue problem can be solved in well-condition, the algorithm still suffers the inherent numerical error stemming from the contour integration in $\zeta$-space. 

Since the numerical differentiation is time-consuming and error-prone, we perform an integration by parts in Eq.(\ref{eq:jan:25:16:01}) and use the logarithmic derivative to avoid evaluating $f'$. In this case,  the main difficulty is the multivalueness of the complex logarithm. Thus,  in order to identify the same branch of $\ln f$ numerically, we write
\begin{equation}
\label{eq:jul:28:14:51}
\ln f(\zeta)=\ln|f|(\zeta)+i\Theta(\zeta),
\end{equation}
with
\begin{equation}
\label{eq:feb:05:12:11}
\Theta=Arg(f)+2\pi m, \textrm{with $m\in\mathbb{Z}, \Theta\in \mathbb{R}$,}
\end{equation}
and  ensure a continuous extended argument $\Theta$ by keeping track of the numerical calculated principal value $Arg(f)$ and selecting appropriate $m$s. 
Then Eq.(\ref{eq:jan:25:16:01}) becomes
\begin{equation}
\label{eq:feb:05:13:14}
\bar{G}_{k}=\frac{1}{2\pi i}\{[\zeta^{k} \ln f ]_{\zeta_{s}}^{\zeta_{e}}-\ointctrclockwise_{\mathcal{C}} k \zeta^{k-1}\ln f d \zeta\},
\end{equation}
with $\zeta_{s} $  and $\zeta_{e}$ denoting the starting-point and end-point of the contour integration.
Introducing the complex logarithm in this way provides several benefits. First, it avoids the evaluating of $f'$. Second, since $[\zeta^{k} \ln f ]_{\zeta_{s}}^{\zeta_{e}}/(2\pi i)$ is an integer, it can be computed accurately as with the Nyquist stability criterion. Third, the complex logarithm is more robust against the overflow and underflow issues.

Having obtained the zeros and poles, an important question arises as to how to estimate the error level. This can be addressed by the following theorem.
\begin{theorem}
\label{theorem:aug:23:10:42}
For small $p\in\mathbb{N}^{+}$, the eigenvalues of the singular pencil $\bar{\mathbf{H}}_{1,N+p}-\lambda\bar{\mathbf{H}}_{0,N+p}$, which is corrupted by noise, fall into two categories:
\begin{enumerate}
	\item $\{ \zeta_{1},\zeta_{2},\cdots,\zeta_{N}\}$, i.e., the eigenvalues of the pencil $\bar{\mathbf{H}}_{1,N}-\lambda\bar{\mathbf{H}}_{0,N}$, which are independent of $p$;
	\item $\{\eta_{1},\eta_{2},\cdots,\eta_{p}\}$, which depend on $p$. 
\end{enumerate}
\end{theorem}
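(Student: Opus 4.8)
The plan is to combine the exact low-rank structure of the two Hankel matrices with first-order perturbation theory for matrix pencils. Since Sec.~\ref{sec:feb:09:15:11} fixes $\gamma_{0}=0$ and $\Delta\gamma=1$, the Vandermonde factorizations (\ref{eq:jun:22:17:30})--(\ref{eq:jun:22:17:31}) extend verbatim to the enlarged index range: with $\mathbf{V}_{N+p}$ the $(N+p)\times N$ rectangular Vandermonde matrix on the distinct nodes $\zeta_{1},\dots,\zeta_{N}$ --- which has full column rank $N$ --- one has
\begin{equation}
\label{eq:aug:23:plan:1}
\bar{\mathbf{H}}_{0,N+p}=\mathbf{V}_{N+p}\,\mathbf{D}_{0}\,\mathbf{V}_{N+p}^{T},\qquad
\bar{\mathbf{H}}_{1,N+p}=\mathbf{V}_{N+p}\,\mathbf{D}_{1}\,\mathbf{V}_{N+p}^{T},
\end{equation}
with $\mathbf{D}_{0}=\mathrm{diag}(n_{1},\dots,n_{N})$ and $\mathbf{D}_{1}=\mathrm{diag}(\zeta_{1}n_{1},\dots,\zeta_{N}n_{N})$ (cf.\ (\ref{eq:jan:26:12:38})--(\ref{eq:jan:26:12:39})). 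In particular both matrices have rank $N$ (Theorem~\ref{theorem:may:04:11:32}), share the column space $\mathcal{R}=\mathrm{Range}(\mathbf{V}_{N+p})$, and have a common $p$-dimensional null space $\mathcal{N}$ consisting of the coefficient vectors of $q(z)r(z)$, where $q(z)=\prod_{j=1}^{N}(z-\zeta_{j})$ and $r$ ranges over polynomials of degree $\le p-1$.

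I would then read off the Kronecker structure of the pencil $\bar{\mathbf{H}}_{1,N+p}-\lambda\bar{\mathbf{H}}_{0,N+p}$. Because $\mathbf{D}_{1}-\lambda\mathbf{D}_{0}=\mathrm{diag}\big(n_{j}(\zeta_{j}-\lambda)\big)$ is regular with eigenvalues $\zeta_{j}$ --- these being nonzero, since the nodes lie on the slotted annulus near the unit circle, and the $n_{j}$ nonzero --- an equivalence transformation adapted to $\mathcal{R}$ and $\mathcal{N}$ brings the pencil into ``an $N\times N$ regular block with eigenvalues $\zeta_{1},\dots,\zeta_{N}$'' $\oplus$ ``singular blocks whose minimal indices sum to $p$''. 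The regular block coincides with the pencil of Eq.~(\ref{eq:jan:26:15:59}) in Theorem~\ref{theorem:may:04:16:45} and is manifestly \emph{independent of} $p$; the singular part contributes no finite eigenvalue in exact arithmetic. Thus in the noise-free case category~(1) is present and $p$-independent, while category~(2) is empty.

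Next I would switch on the noise, $\bar{\mathbf{H}}_{k,N+p}\mapsto\bar{\mathbf{H}}_{k,N+p}+\mathbf{E}_{k}$ with $\|\mathbf{E}_{k}\|=O(\delta)$. Since $\bar{\mathbf{H}}_{0,N+p}+\mathbf{E}_{0}$ is generically invertible, the perturbed pencil is regular and its characteristic polynomial has degree exactly $N+p$; the same block reduction plus a Schur complement splits its roots as follows. There are $N$ roots within $O(\delta)$ of $\zeta_{1},\dots,\zeta_{N}$, whose displacement is controlled by the $N$-point sensitivity estimate (\ref{eq:jun:22:23:08}) and which remain, to leading order, $p$-independent --- these are category~(1). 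The remaining $p$ roots $\eta_{1},\dots,\eta_{p}$ are, to leading order, the eigenvalues of the $p\times p$ pencil assembled from the noise blocks restricted to $\mathcal{N}$ (plus $O(\delta)$ Schur corrections); being quotients of $O(\delta)$ quantities they are not pinned down by $\delta$, their number is exactly $p$, and generically their values change when $p$ changes --- which is precisely item~(2). That dependence is what makes the eigenvalues that \emph{persist} under $p\to p+1$ identifiable as the physical ones and their small spread a usable error estimate. A Rouch\'e/continuity argument on the two disjoint root clusters, valid once $\delta$ is small enough (a threshold that shrinks as $p$ grows, explaining the ``small $p$'' hypothesis), closes the dichotomy.

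The main obstacle is this last step: rigorously certifying that exactly $p$ of the perturbed eigenvalues are spurious and that they genuinely move with $p$, as opposed to merely bounding how far the $\zeta_{j}$ drift. The delicate points are that the unperturbed \emph{singular} pencil offers nothing to perturb from along $\mathcal{N}$ (the indeterminate $0/0$), so the $\eta_{i}$ must be extracted directly from the regularized determinant; that the equivalence transformation adapted to $\mathcal{R}$ and $\mathcal{N}$ must be chosen with care because $\mathcal{R}$ can be isotropic for the bilinear form implicit in the Hankel symmetry; and that two non-generic coincidences must be excluded --- the $p\times p$ noise block being singular (harmlessly sending some $\eta_{i}$ to infinity) and an $\eta_{i}$ happening to equal some $\zeta_{j}$ --- both occurring only on a measure-zero set of perturbations. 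The remaining ingredients (the two factorizations, the block form, the degree count) are routine.
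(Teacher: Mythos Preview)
Your plan is sound and would prove the statement, but it takes a genuinely different route from the paper. The paper does not invoke the Kronecker form or a block/Schur reduction. Instead it argues entirely through \emph{cofactor expansion of the determinant}: it shows that every $N\times N$ minor of $\bar{\mathbf{H}}_{1,N+p}-\lambda\bar{\mathbf{H}}_{0,N+p}$ obtained by deleting $p$ rows and $p$ columns is itself a pencil of the form $\mathbf{V}_{i}\mathbf{D}_{1}\mathbf{V}_{j}^{T}-\lambda\mathbf{V}_{i}\mathbf{D}_{0}\mathbf{V}_{j}^{T}$ (with $\mathbf{V}_{i}$ the rectangular Vandermonde matrix with row $i$ removed), and hence has characteristic roots exactly $\{\zeta_{1},\dots,\zeta_{N}\}$. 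Writing $\det(\bar{\mathbf{H}}_{1,N+p}-\lambda\bar{\mathbf{H}}_{0,N+p})=\sum_{l}a_{l}M_{l}$ by Laplace expansion, each $M_{l}$ vanishes at the $\zeta_{i}$; the paper then reruns the sensitivity estimate of Sec.~\ref{sec:feb:09:15:11} on these minor pencils to conclude that the condition number of the corrupted $(N{+}p)\times(N{+}p)$ pencil is only a factor $(r_{+}/r_{-})^{p}\sim\mathcal{O}(1)$ worse, so a QZ solve still returns the $\zeta_{i}$ stably.

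What each approach buys: the paper's minor-pencil argument is more elementary and stays entirely within the Vandermonde algebra already set up; it also yields, as a by-product, the interpretation of $\zeta_{i}^{(1)}$ as a weighted average over the $N{+}1$ equivalent minor pencils (used immediately after the proof to justify Eq.~(\ref{eq:aug:23:12:35})). Your Kronecker/Schur route is structurally cleaner about what the spurious $\eta_{i}$ actually are --- eigenvalues of the noise restricted to the common null space $\mathcal{N}$ --- and makes the $p$-dependence of category~(2) explicit rather than implicit, at the cost of importing more machinery and having to handle the isotropic-subspace and measure-zero issues you correctly flag.
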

\begin{proof}
	First, we redefine Eq.(\ref{eq:jan:25:12:59})  as
\begin{equation}
\label{eq:aug:23:11:15}
g^{(m)}_{k}(\zeta) =\zeta^{k+m}, m\in \mathbb{N},
\end{equation}
and denote the $(i,j)$ minor matrices of $\bar{\mathbf{H}}^{(m)}_{0,N+1}, \bar{\mathbf{H}}^{(m)}_{1,N+1}$ as $\mathbf{M}^{(m)}_{0,ij}$ and $\mathbf{M}^{(m)}_{1,ij}$, respectively. 

Then the minors can be decomposed as 
\begin{equation}
\label{eq:feb:06:17:09}
\mathbf{M}_{0,ij}^{(m)}=\mathbf{V}_{i} \mathbf{D}_{0} \mathbf{V}^{T}_{j},
\end{equation}
and
\begin{equation}
\label{eq:feb:06:17:10}
\mathbf{M}_{1,ij}^{(m)}=\mathbf{V}_{i} \mathbf{D}_{1} \mathbf{V}^{T}_{j},
\end{equation}
where
\begin{eqnarray}
\label{eq:fen:06:17:57}
\mathbf{D}_{0}=diag(\zeta_{1}^{m}n_{1},\zeta_{2}^{m}n_{2},\cdots, \zeta_{N}^{m}n_{N}),
\end{eqnarray}
\begin{eqnarray}
\label{eq:fen:06:17:58}
\mathbf{D}_{1}=diag(\zeta_{1}^{m+1}n_{1}, \zeta_{2}^{m+1}n_{2},\cdots,\zeta_{N}^{m+1}n_{N}),
\end{eqnarray}
and
\begin{displaymath}
	\mathbf{V}_{i}=
	\left( \begin{array}{ccccccc}
			1 & 1&\ldots &\ldots &\ldots &\ldots & 1\\
			\zeta_{1} &\zeta_{2}&\ldots&\ldots&\ldots&\ldots  &  \zeta_{N}\\
			 \vdots & \vdots & \vdots & \vdots & \vdots  &\vdots& \vdots \\
			\zeta_{1}^{i-2} &\zeta^{i-2}_{2}&\ldots&\ldots&\ldots&\ldots  &  \zeta^{i-2}_{N}\\
			\zeta_{1}^{i} &\zeta^{i}_{2}&\ldots&\ldots&\ldots&\ldots  &  \zeta^{i}_{N}\\
			 \vdots & \vdots & \vdots & \vdots & \vdots  &\vdots& \vdots \\
			\zeta_{1}^{N} &\zeta^{N}_{2}&\ldots&\ldots&\ldots&\ldots  &  \zeta^{N}_{N}\\
	\end{array}\right),
\end{displaymath}
which is just the result of removing $i$-th row from a $(N+1)\times N$ Vandermonde matrix $\mathbf{V}$. Similarly, we can get $\mathbf{V}^{T}_{j}$ by removing $j$-th column from a $N\times (N+1)$ matrix $\mathbf{V}^{T}$.
Therefore,   a new generalized eigenvalue problem $\mathbf{M}^{(m)}_{1,ij}\vec{x}=\lambda\mathbf{M}^{(m)}_{0,ij}\vec{x}$ can be constructed. 

	Considering that
\begin{eqnarray}
\label{eq:fen:06:18:25}
& &\mathbf{M}^{(m)}_{1,ij}\vec{x}=\lambda\mathbf{M}^{(m)}_{0,ij}\vec{x}\\
&\Leftrightarrow&\mathbf{V}_{i} \mathbf{D}_{1} \mathbf{V}^{T}_{j}\vec{x}=\lambda\mathbf{V}_{i} \mathbf{D}_{0} \mathbf{V}^{T}_{j}\vec{x}\nonumber\\
&\Leftrightarrow& diag(\zeta_{1},\zeta_{2},\cdots,\zeta_{N})( \mathbf{V}^{T}_{j}\vec{x})=\lambda(\mathbf{V}^{T}_{j}\vec{x}),\nonumber
\end{eqnarray}
one readily finds that the pencil $\mathbf{M}^{(m)}_{1,ij}-\lambda\mathbf{M}^{(m)}_{0,ij}$ have the same eigenvalues with $\bar{\mathbf{H}}_{1,N}-\lambda\bar{\mathbf{H}}_{0,N}$, namely, $\{\zeta_{i}\}$ are also solutions to $\det(\mathbf{M}^{(m)}_{1,ij}-\lambda\mathbf{M}^{(m)}_{0,ij})\equiv M=0$. 
Thus, a cofactor expansion in row yields
\begin{equation}
\label{eq:aug:23:12:20}
	\det(\bar{\mathbf{H}}_{1,N+p}-\lambda\bar{\mathbf{H}}_{0,N+p})=\sum_{l}a_{l}M_{l},
\end{equation}
	where $l$ represents the number of $N\times N$ minors, and  $M_{l}$ denotes the associated determinants.
	Since the formal solution of each $M_{l}=0$ is given by $\{\zeta_{i}\}$, it follows that $\{\zeta_{i}\}$ are also eigenvalues of the corrupted pencil $\bar{\mathbf{H}}_{1,N+p}-\lambda\bar{\mathbf{H}}_{0,N+p}$. 
	Meanwhile, the argument for sensitivity analysis in the previous section  can be straightforwardly repeated. As a result,  one obtains that the condition number of Eq. (\ref{eq:fen:06:18:25}) is $(r_{+}/r_{-})^{m}$ higher than that in Eq. (\ref{eq:jun:22:23:08}), and for a small $p$ with $(r_{+}/r_{-})^{p}\sim \mathcal{O}(1)$, the condition number of the corrupted pencil is of the same order as that of the original pencil $\bar{\mathbf{H}}_{1,N}-\lambda\bar{\mathbf{H}}_{0,N}$.
	 Furthermore, recalling that  the QZ-algorithm does not involve rank determination and matrix inversion, indeed we can obtain $\{\zeta_{i}\}$ from the corrupted pencil  $\bar{\mathbf{H}}_{1,N+p}-\lambda\bar{\mathbf{H}}_{0,N+p}$ via a QZ-algorithm in well-condition \cite{golub132}.

This proves the theorem.
\end{proof}
Theorem (\ref{theorem:aug:23:10:42}) provides a convenient way to estimate the order of absolute errors. Let $\zeta_{i}^{(0)}$ and $\zeta_{i}^{(1)}$ be the same eigenvalue calculated, respectively,  from $\bar{\mathbf{H}}_{1,N}-\lambda\bar{\mathbf{H}}_{0,N}$ and $\bar{\mathbf{H}}_{1,N+1}-\lambda\bar{\mathbf{H}}_{0,N+1}$, then the numerical error can be simply estimated by
\begin{eqnarray}
\label{eq:aug:23:12:35}
\delta_{i}=\frac{|\zeta_{i}^{(0)}-\zeta_{i}^{(1)}|}{2}.
\end{eqnarray}
It should be emphasized that, due to the $N+1$ cofactors involved  and $|a_{l}|\sim \mathcal{O}(1)$ near the unit cile, $\zeta_{i}^{(1)}$ is essentially a weighted average over the results of $N+1$ equivalent pencils.

\section{Numerical examples}
\label{sec:feb:09:15:13}

To verify the strategies described in Secs. (\ref{sec:feb:09:15:11}) and (\ref{sec:feb:09:15:12}), we have implemented the algorithm to carry out illustrative examples.
Specifically, the algorithm can be sketched as follows:
\begin{enumerate}[Step 1]
	\item Set a critical condition number $\kappa^{2}_{c}$ and an error tolerance of the contour integration $\epsilon_{i}$;
	\item    Transform the rectangle region searched in $z$-space, say $\mathbb{D}$, to a slotted annulus in $\zeta$-space, with $\epsilon_{0}$ introduced to avoid the branch cut of the complex logarithm;
	\item  Construct Hankel matrices, determine their ranks and solve the pencil $\bar{\mathbf{H}}_{1,N}-\lambda\bar{\mathbf{H}}_{0,N}$;
	\item Calculate the condition number $\kappa^{2}_{\infty}$ using Eq.(\ref{eq:jun:22:23:08}),
		\begin{itemize}
			\item if $\kappa^{2}_{\infty}>\kappa^{2}_{c}$, subdivide $\mathbb{D}$ into smaller subregions $\mathbb{D}_{i}$, and go back to Step 2; 
			\item otherwise, continue;
		\end{itemize}
	\item Solve the corrupted pencil $\bar{\mathbf{H}}_{1,N+1}-\lambda\bar{\mathbf{H}}_{0,N+1}$ for error estimate;
	\item Multiplicities are obtained via the associated Vandermonde system, i.e., Eq.(\ref{eq:jan:25:16:01}).
\end{enumerate}
The following numerical examples serve as tests of the algorithm, 
and illuminate the potential applications  to plasma physics.
Multiplicities are recovered successfully in all cases.  


\subsubsection{Example 1:}
Considering a trivial test case
\begin{eqnarray}
\label{eq:feb:07:15:51}
f=\frac{(z-0.8-0.9i)(z-0.7+0.8i)(z+0.6+0.7i) }{(z+0.5-0.6i)^{2}},
\end{eqnarray}
and taking $\kappa_{c}^{2}=128, \epsilon_{i}=1.49\times 10^{-8}$ and $\epsilon_{0}=0.1$, the numerically computed zeros and poles of $f$ are shown in Tab.(\ref{tab:apr:14:14:12}). Here, $\delta_{i,e}$ and $\delta_{i,t}$ stand for, respectively, the error estimate given by Theorem (\ref{theorem:aug:23:10:42}) and the true error. Table (\ref{tab:apr:14:14:12}) demonstrates that the numerical error estimate is reasonably accurate.
\begin{table}[!htp]
\resizebox{\textwidth}{!}{
\begin{tabular}{ccc}
\hline\noalign{\smallskip}
$z_{i}$& $\delta_{i,e}$& $\delta_{i,t}$\\
\noalign{\smallskip}\hline\noalign{\smallskip}
$-0.5999999999753678-0.6999999999322971i$ & $1.36\times 10^{-10}$ & $ 7.20\times 10^{-11}$\\
$0.7000000004745937-0.7999999997652205i$ & $3.91\times 10^{-10}$ & $5.29\times 10^{-10} $\\
$0.7999999995583811+0.9000000002491819i$ & $3.55\times 10^{-10}$ & $5.07\times 10^{-10} $\\
$-0.5000000000007568+0.5999999999992878i$ & $3.12\times 10^{-12}$ & $1.04\times 10^{-12} $\\
\noalign{\smallskip}\hline
\end{tabular}
}
\caption{Numerically computed zeros and poles.}
\label{tab:apr:14:14:12}
\end{table}

\subsubsection{Example 2:}
An efficient method for the nonlinear eigenvalue problem has not been found. 
From Secs. (\ref{sec:feb:09:15:11}) and (\ref{sec:feb:09:15:12}), it is evident that the present algorithm is stable and easy-to-parallel.
Therefore, with the advance of the fast-increasing computational power, it may offer a possible approach to analysing the nonlinear eigenvalue problem.
As an example, we solve a transcendental eigenvalue problem \cite{cit:feb:08:13:49}:
$\det((e^{\lambda}-1)\mathbf{A}_{2}+\lambda^{2}\mathbf{A}_{1}-\mathbf{A}_{0})=0$,
with
\begin{displaymath}
	\mathbf{A}_{2}=
	\left( \begin{array}{ccc}
			17.6 & 1.28& 2.89\\
			1.28 & 0.824& 0.413\\
			2.89 & 0.413& 0.725
	\end{array}\right),
	\mathbf{A}_{1}=
	\left( \begin{array}{ccc}
			7.66 & 2.45& 2.1\\
			0.23 & 1.04& 0.223\\
			0.6 & 0.756& 0.658 
	\end{array}\right),
\end{displaymath}
\begin{displaymath}
	\mathbf{A}_{0}=
	\left( \begin{array}{ccc}
			12.1 & 18.9& 15.9\\
			0 & 2.7& 0.145\\
			11.9 & 3.64& 15.5 
	\end{array}\right).
\end{displaymath}
Shown in Table.(\ref{tab:feb:08:15:48}) are the simple zeros within the region $|Im(z_{i})|, |Re(z_{i})|\le 10$ and their associated errors. Again, the errors are estimated reasonably accurate.

\begin{table}[!htp]
\resizebox{\textwidth}{!}{
\begin{tabular}{ccc}
\hline\noalign{\smallskip}
$z_{i}$& $\delta_{i,e}$& $\delta_{i,t}$\\
\noalign{\smallskip}\hline\noalign{\smallskip}
$0.065949131387977-1.10\times 10^{-12}i$ & $2.70\times 10^{-12}$ & $ 1.33\times 10^{-12}$\\
$0.853377172251995+2.12\times 10^{-12}i$ & $5.39\times 10^{-12}$ & $ 2.48\times 10^{-12}$\\
$3.638975634806435+3.22\times 10^{-11}i$ & $6.81\times 10^{-11}$ & $ 3.59\times 10^{-11}$\\
$-5.587398329471895+9.17\times 10^{-13}i$ & $1.62\times 10^{-14}$ & $ 9.17\times 10^{-13}$\\
$-1.940259421974321+4.22\times 10^{-12}i$ & $7.76\times 10^{-12}$ & $ 4.61\times 10^{-12}$\\
$-0.936953776134564+4.39\times 10^{-12}i$ & $7.02\times 10^{-12}$ & $ 4.43\times 10^{-12}$\\
$4.750269139855016-5.443800760044676i$ & $1.74\times 10^{-14}$ & $ 2.24\times 10^{-13}$\\
$3.061926419734661-5.265134384625599i$ & $1.06\times 10^{-12}$ & $ 4.38\times 10^{-12}$\\
$3.858870604351882-4.985782136928656i$ & $1.83\times 10^{-13}$ & $ 4.00\times 10^{-12}$\\
$3.858870604352364+4.985782136922126i$ & $9.89\times 10^{-13}$ & $ 7.21\times 10^{-12}$\\
$3.061926419737111+5.265134384628629i$ & $6.44\times 10^{-12}$ & $ 3.17\times 10^{-12}$\\
$4.750269139854812+5.443800760044741i$ & $2.07\times 10^{-14}$ & $ 1.16\times 10^{-13}$\\
\noalign{\smallskip}\hline
\end{tabular}
}
	\caption{Numerically computed zeros of $\det((e^{\lambda}-1)\mathbf{A}_{2}+\lambda^{2}\mathbf{A}_{1}-\mathbf{A}_{0})=0$. $\kappa^{2}_{c}, \epsilon_{i}$ and $\epsilon_{0}$ are the same as Table (\ref{tab:apr:14:14:12})}
\label{tab:feb:08:15:48}
\end{table}

\subsubsection{Example 3:}
In this example, we consider the zeros of plasma dispersion function \cite{cit:feb:27:11:02}, which is widely used to model the wave-particle interaction in kinetic plasma turbulence. Specifically, the function is defined as 
\begin{eqnarray}
\label{eq:feb:07:18:32}
Z(z)=\frac{1}{\sqrt{\pi}}\int_{-\infty}^{+\infty}dv\frac{e^{-v^{2}}}{v-z},\quad \textrm{with Im$(z)>1$,}
\end{eqnarray}
and as its analytic continuation for $\textrm{Im}(z)\le 0$. Due to the symmetry property $Z(z^{*})=-[Z(-z)]^{*}$, it is straightforward to show that zeros of the plasma dispersion function occur in real conjugate  pairs.
By taking the same $\kappa^{2}_{c}, \epsilon_{i}$ and $\epsilon_{0}$ as before,  the numerical computed simple zeros within the range $\textrm{Im}(z)\ge-5$ are listed in Table.(\ref{tab:feb:07:19:20}). Furthermore, it is interesting to note that, 
zeros of the plasma dispersion function approach $\textrm{Im}(z)=-|\textrm{Re}(z)|$ in the $|z|\gg 1$ limit  (as seen in Fig. (\ref{eps:feb:07:19:35})), which is a typical feature of the error function. In fact, an alternative representation of $Z(z)$ is
\begin{eqnarray}
\label{eq:feb:07:19:50}
	Z(z)=i\sqrt{\pi}e^{-z^{2}}[1+\textrm{erf}(iz)].
\end{eqnarray}
\begin{table}[!htp]
\resizebox{\textwidth}{!}{
\begin{tabular}{ccc}
\hline\noalign{\smallskip}
$z_{i}$& $\delta_{i,e}$& $\delta_{i,t}$\\
\noalign{\smallskip}\hline\noalign{\smallskip}
$1.99146684283858-1.35481012808997i$ & $2.17\times 10^{-11}$ & $ 2.25\times 10^{-11}$\\
$2.69114902411825-2.17704490608676i$ & $1.40\times 10^{-11}$ & $ 1.33\times 10^{-11}$\\
$3.23533086843928-2.78438761010462i$ & $3.13\times 10^{-9}$ & $ 6.44\times 10^{-9}$\\
$3.69730970246813-3.28741078938962i$ & $1.22\times 10^{-14}$ & $ 4.14\times 10^{-13}$\\
$4.10610728467995-3.72594871944305i$ & $6.08\times 10^{-12}$ & $ 3.83\times 10^{-12}$\\
$4.47681569296707-4.11963522761023i$ & $2.60\times 10^{-13}$ & $ 1.57\times 10^{-12}$\\
$4.81848829189866-4.47983279758210i$ & $2.47\times 10^{-10}$ & $ 1.50\times 10^{-10}$\\
$5.13706727240611-4.81380668333976i$ & $2.73\times 10^{-10}$ & $ 1.73\times 10^{-9}$\\
\noalign{\smallskip}\hline
\end{tabular}
}
	\caption{Zeros of the plasma dispersion function $Z(z)$ in the region $\textrm{Im}(z)\ge -5$. $\kappa^{2}_{c}, \epsilon_{i}$ and $\epsilon_{0}$ are the same as Table (\ref{tab:apr:14:14:12})}
\label{tab:feb:07:19:20}
\end{table}
\begin{figure}[!htp]
\centering
\includegraphics[scale=0.45]{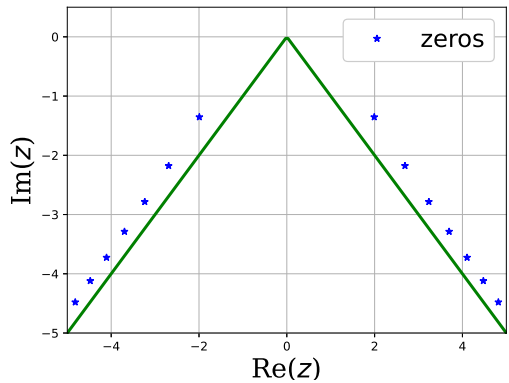}
	\caption{Zeros of the plasma dispersion function $Z(z)$ in the region $\textrm{Im}(z)\ge -5$. $\kappa^{2}_{c}, \epsilon_{i}$ and $\epsilon_{0}$ are the same as Table (\ref{tab:apr:14:14:12})}
\label{eps:feb:07:19:35}
\end{figure}

\subsubsection{Example 4:}
As an example of considerable practical importance, the present algorithm has been applied to the systematic numerical investigation of low-frequency electromagnetic waves in finite-$\beta$ anisotropic plasmas \cite{chen21}. Here, $\beta=8\pi P/B^{2}$ is the ratio between kinetic and magnetic energy densities.

Electromagnetic fluctuations  with frequencies much lower than the ion cyclotron frequency are prevalent in nature and laboratory plasmas.
Theoretically,
a self-consistent description of these fluctuations can be derived from the so-called gyrokinetic Maxwell equations \cite{frieman,chen91,brizard}. 
Specifically, the plasma response is described by the linear gyrokinetic equation \cite{frieman},  while the electromagnetic perturbations are characterized by three fluctuating scalar fields: the electrostatic potential $\delta\phi$, the scalar induced potential $\delta\psi$ accounting for the perpendicular magnetic field fluctuation, and the compressional magnetic fluctuation $\delta B_{\parallel}$.
 Within this approach, the governing equations for the time evolution of $(\delta\phi, \delta\psi, \delta B_{\parallel})$ are the quasineutrality condition, perpendicular component of Ampere's law, and gyrokinetic vorticity equation  \cite{chen91}.

For a uniform and finite-$\beta$ plasma immersed in a uniform background magnetic field $\bm{B}=B_{0}\bm{e}_{z}$,  the fluctuating variables can be decomposed into Fourier series,
\begin{eqnarray}
\label{eq:fourier}
	[\delta\phi,\delta\psi,\delta B_{\parallel}]=\sum_{\bf{k}}[\delta\phi_{k},\delta\psi_{k},\delta B_{\parallel,k}]e^{i(\bf{k}\cdot\bf{x}-\omega t)}.
\end{eqnarray}
Assuming  an anisotropic bi-Maxwellian equilibrium distribution,
\begin{eqnarray}
\label{eq:F0}
	F_{0s}=\frac{N_{s}}{\pi^{3/2} v^{2}_{ts\perp} v_{ts\parallel}}e^{-\frac{v_{\parallel}^{2}}{v_{ts\parallel}^{2}}-\frac{v_{\perp}^{2}}{v_{ts\perp}^{2}}},
\end{eqnarray}
where the subscript $s$ denotes the particle species,  $N_{s}$ is the unperturbed particle density, $v_{ts\perp(\parallel)}$ is the perpendicular (parallel) thermal velocity and $T_{s\perp(\parallel)}=m_{s}v_{ts\perp(\parallel)}^{2}/2$ is the corresponding temperature,
the linear gyrokinetic Maxwell equations can thus be rendered into a complicated nonlinear eigenvalue problem:
\begin{eqnarray}
\label{eq:eigenmode}
	Q_{1}\Phi_{\parallel}+V_{1}\Psi+Q_{3}B_{\parallel}&=&0\nonumber\\
	V_{1}\Phi_{\parallel}+(V_{1}+\frac{V_{2}}{\Omega^{2}})\Psi+V_{3}B_{\parallel}&=&0\\
	-\frac{\beta_{i\parallel}}{2}(Q_{3}\Phi_{\parallel}+V_{3}\Psi)+A_{3}B_{\parallel}&=&0.\nonumber
\end{eqnarray}
Here, the fields have been normalized as
\begin{eqnarray}
\label{eq:normalization}
	\Phi=\frac{2e\delta\phi}{m_{i}v_{ti\parallel}^{2}},\quad \Psi=\frac{2e\delta\psi}{m_{i}v_{ti\parallel}^{2}},\quad \frac{\delta B_{\parallel}}{B}=B_{\parallel}
\end{eqnarray}
and $\Phi_{\parallel}=\Phi-\Psi$ is related to the parallel electric field. 
The associated coefficients, meanwhile, are given by
\begin{eqnarray}
\label{eq:coef}
	Q_{1}&=&-\sum_{s}\frac{T_{i\parallel}}{T_{s\parallel}}[ (1+\xi_{s}Z_{s} \Gamma_{0s})+a_{s}(1-\Gamma_{0s})],\nonumber\\
	Q_{3}&=&\sum_{s}\frac{|q_{s}|}{q_{s}}\frac{\Gamma_{0s}-\Gamma_{1s}}{1+a_{s}}(a_{s}-\xi_{s}Z_{s}),\nonumber\\
	V_{1}&=&-\sum_{s}(1+a_{s})\frac{T_{i\parallel}}{T_{s\parallel}}(1-\Gamma_{0s}),\nonumber\\
	V_{2}&=&\sigma_{k}(1+a_{i})b_{i},\nonumber\\
	V_{3}&=&\sum_{s}\frac{|q_{s}|}{q_{s}}(\Gamma_{0s}-\Gamma_{1s}),\nonumber\\
	A_{3}&=&-1+\sum_{s}\frac{\beta_{s\perp}}{1+a_{s}}(\Gamma_{0s}-\Gamma_{1s}) [\xi_{s}Z_{s}-a_{s}],
\end{eqnarray}
where $T_{e\parallel}/T_{i\parallel}=\tau$ is the parallel temperature ratio between electron and ion, $\beta_{s\perp(\parallel)}=8\pi N_{0}T_{s\perp(\parallel)}/B^{2}$, $\xi_{s}=\omega/|k_{\parallel}|v_{ts\parallel}$, $\Omega=\omega/|k_{\parallel}|v_{A}$, and $\omega$ is the desired eigenvalue. $q_{s}$ is the particle charge, $v_{A}=(B^{2}/4\pi N_{0}m_{i})^{1/2}$ is the Alfv\'{e}n velocity and $a_{s}=(T_{s\parallel}^{2}/T_{s\perp}^{2})-1$ explicitly accounts for  the temperature anisotropy.
$Z_{s}=Z(\xi_{s})$ is the plasma dispersion function.
	$\Gamma_{j s}=I_{j}(b_{s}) \textrm{exp}(-b_{s})$ can be regarded as finite Larmor radius (FLR) effect with $I_{j}$ being the first kind modified Bessel function with $b_{s}=k_{\perp}^{2}\rho_{ts\perp}^{2}/2$,
	and $\sigma_{k}=1-\sum_{s}\beta_{s\perp}a_{s}(1-\Gamma_{0s})/2 b_{s}$ is the gyrokinetic firehose stability term. 
	Furthermore,  from the symmetry property of plasma dispersion function, one readily concludes that the eigenvalues of Eq. (\ref{eq:eigenmode}) also occur in real conjugate  pairs.

	Adopting the same $\kappa^{2}_{c}, \epsilon_{i}$ and $\epsilon_{0}$ as in Tab.(\ref{tab:apr:14:14:12}). the present algorithm is applied to systematically study the nonlinear eigenvalue problem  Eq. (\ref{eq:eigenmode}). For the first time, the whole spectrum of normal modes are illustrated, including both the ion-sound wave (ISW) branch, the kinetic Alfv\'{e}n wave (KAW) branch and the mirror mode (MM) branch, as in Fig. (\ref{eps:kaw}).

	Unlike the shear Alfv\'{e}n wave in the ideal magnetohydrodynamic limit, the kinetic Alfv\'{e}n wave possesses finite parallel electric field due to the coupling between the shear Alfv\'{e}n wave and ion-sound wave branch \cite{hasegawa75,hasegawa76}.
	As a consequence, KAWs are expected to play crucial roles in heating, accelerating and transport processes of charged particles.
	The present algorithm, however, enables the identification of a new class of kinetic Alfv\'{e}n waves with finite parallel electric field, arising from the strongly coupling between shear Alfv\'{e}n wave and the mirror mode branch (see Fig. (\ref{eps:kaw-mm})). This type of KAW has distinctive features, and may play a crucial role in anisotropic high-$\beta$ solar wind plasmas \cite{chen21}.
	Note that here we just present the key results, interested readers are referred to the original work for details \cite{chen21}.

\begin{figure}[!htp]
\centering
\includegraphics[scale=0.45]{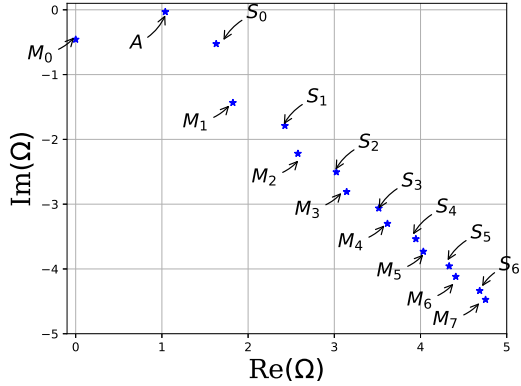}
	\caption{Eigenvalues of Eq. (\ref{eq:eigenmode}) in the region $\textrm{Im}(\Omega)\ge -5$ and $0\le \textrm{Re}(\Omega)\le 5$, for $\beta_{i\perp}=1$, $b_{i}=0.1$, $\tau=10$, $a_{i}=a_{e}=0$ and $m_{i}/m_{e}=1836$. KAW, mirror modes and ion-sound waves are, respectively, denoted by $A$, $S_{j}$ and $M_{j}$.  $\kappa^{2}_{c}, \epsilon_{i}$ and $\epsilon_{0}$ are the same as Table (\ref{tab:apr:14:14:12})}
\label{eps:kaw}
\end{figure}
\begin{figure}[!htp]
\centering
\includegraphics[scale=0.45]{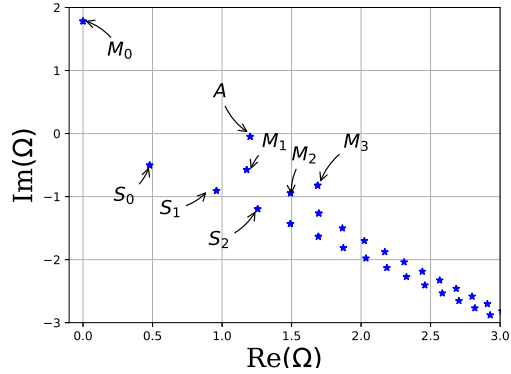}
	\caption{Eigenvalues of Eq. (\ref{eq:eigenmode}) in the region $\textrm{Im}(\Omega)\ge -3$ and $0\le \textrm{Re}(\Omega)\le 3$, for $\beta_{i\perp}=1$, $b_{i}=0.1$, $\tau=1$, $a_{i}=0$, $a_{e}=-0.82$ and $m_{i}/m_{e}=1836$. KAW, mirror modes and ion-sound waves are, respectively, denoted by $A$, $S_{j}$ and $M_{j}$.  $\kappa^{2}_{c}, \epsilon_{i}$ and $\epsilon_{0}$ are the same as Table (\ref{tab:apr:14:14:12})}
\label{eps:kaw-mm}
\end{figure}


\section{Conclusions}
\label{sec:feb:09:15:15}
In this paper, we revisited the numerical method for locating the zeros and poles of a meromorphic function based on the generalized argument principle.
After a detailed sensitivity analysis, a subdivision-transformation-calculation scheme is proposed to ensure the algorithm stability. 
Contrary to previous methods, this algorithm gives a novel method to automatically estimate the underlying numerical errors.
Numerical examples are presented to validate and verify the algorithm and related error estimates.
Meanwhile, the algorithm is further applied to investigate linear waves arising from plasma physics, which are essentially proper subsets of the nonlinear eigenvalue problem.
Especially, the algorithm has provided the first whole spectrum of waves in uniform gyrokinetic plasmas, and led to the discovery of a new class of kinetic Alfv\'{e}n waves.
Detailed applications of the present algorithm to various waves and instabilities in plasma physics will be reported in future publications.




\begin{acknowledgements}
The author would like to thank Xiaoke Fang and Prof. Liu Chen for useful conversations.
This work was supported by National Natural Science Foundation of China under Grant No. 11905097.
The support from the European Research Council (ERC) under the European Unions Horizon 2020 research and innovation programme (grant agreement No. 805162) is also gratefully acknowledged.

\end{acknowledgements}

\section*{Data Availability Statement}
Data sharing not applicable to this article as no datasets were generated or analysed during the current study.




%
 \section*{Conflict of interest}
 The authors declare that they have no conflict of interest.



\end{document}